\documentclass[12pt,reqno]{amsart}
\usepackage{amsmath,amssymb,amsfonts,amscd,latexsym,amsthm,mathrsfs,verbatim,comment}
\usepackage{enumerate}
\usepackage[unicode]{hyperref}
\textheight24.5cm \textwidth16.5cm \hoffset-2.0cm \voffset-2.0cm
%
\newtheorem{theorem}{Theorem}[section]
\newtheorem{prop}[theorem]{Proposition}
\theoremstyle{remark}
\newtheorem{remark}[theorem]{\bf Remark}
\newtheorem{example}[theorem]{\bf Example}
\newtheorem*{acknowledgements}{\bf Acknowledgements}
\newcommand{\ba}{\boldsymbol a}
\newcommand{\bt}{\boldsymbol t}
\newcommand{\bs}{\boldsymbol s}
\newcommand{\bF}{\boldsymbol F}
\newcommand{\N}{\bold N}
\newcommand{\cA}{\mathcal A}
\newcommand{\cP}{\mathcal P}
\newcommand\Gal{\operatorname{Gal}}
\newcommand\Frob{\operatorname{Frob}}
\newcommand\ord{\operatorname{ord}}
\begin{document}

\title[Irrationality and transcendence in the `poor man's ad\`ele ring']{Irrationality and transcendence questions\\in the `poor man's ad\`ele ring'}

\author{Florian Luca}
\address{Stellenbosch University, Mathematics, Merriman Street 7600 Stellenbosch, South Africa}
\urladdr{https://florianluca.com/}

\author{Wadim Zudilin}
\address{Department of Mathematics, IMAPP, Radboud University, PO Box 9010, 6500~GL Nij\-me\-gen, The Netherlands}
\urladdr{https://www.math.ru.nl/~wzudilin/}

\date{18 May 2025}

\dedicatory{To Pieter Moree: Gefeliciteerd met je 60ste verjaarjaar!}

\subjclass[2020]{11J81 (primary), 11A41, 11J72, 11B39, 11B68, 16U10 (secondary).}
\keywords{Irrationality, transcendence, $q$-Fibonacci number, Bernoulli number, Wolstenholme's prime}

\maketitle

\begin{abstract}
We discuss arithmetic questions related to the `poor man's ad\`ele ring' $\mathcal A$ whose elements are encoded by sequences $(t_p)_p$ indexed by prime numbers, with each $t_p$ viewed as a residue in $\mathbb Z/p\mathbb Z$.
Our main theorem is about the $\mathcal A$-transcendence of the element $(F_p(q))_p$, where $F_n(q)$ (Schur's $q$-Fibonacci numbers) are the $(1,1)$-entries of $2\times2$-matrices
$$
\begin{pmatrix} 1 & 1 \\ 1 & 0 \end{pmatrix}
\begin{pmatrix} 1 & 1 \\ q & 0 \end{pmatrix}
\begin{pmatrix} 1 & 1 \\ q^2 & 0 \end{pmatrix}
\cdots
\begin{pmatrix} 1 & 1 \\ q^{n-2} & 0 \end{pmatrix}
$$
and $q>1$ is an integer. This result was previously known for $q>1$ square free under the GRH.
\end{abstract}


\section{Motivational introduction}
\label{s1}

An intriguing version of finite multiple zeta values (FMZVs) introduced in \cite{KZ25} (see also \cite{Ka19,KMS25}) assumes values in the `poor man's ad\`ele ring'
\[
\cA=\bigg(\prod_p(\mathbb Z/p\mathbb Z)\bigg)\bigg/\bigg(\bigoplus_p(\mathbb Z/p\mathbb Z)\bigg),
\]
which is the quotient ring of the direct product of $\mathbb Z/p\mathbb Z$ taken over all primes $p\in\cP$ modulo the ideal of the direct sum. The ring was introduced by Kontsevich in~\cite{Ko09} for somewhat different purposes. The field $\mathbb Q$ of rational numbers is naturally embedded in $\cA$ diagonally, so that $\cA$ is regarded as a $\mathbb Q$-algebra via this embedding.
We can encode elements $\bt$ of $\cA$ as infinite vectors $\bt=(t_2,t_3,t_5,t_7,\dots)=(t_p)_{p\in\cP}$, where $t_p\in\mathbb Q$, with the equivalence relation $\bt\sim\bs$ corresponding to $t_p\equiv s_p\bmod p$ for all primes $p\gg1$ (in particular, both $t_p\bmod p$ and $s_p\bmod p$ are well defined for such primes~$p$); a few first components of $\bt$ can be even undefined (and denoted as $\infty$ for convenience).
In particular, being rational in $\cA$ simply means being equivalent to a constant element $\bt=(t_p)_{p\in\cP}$ with $t_p=t\in\mathbb Q$ for all $p\gg1$.

One of the challenges posed by Kaneko and Zagier in \cite{KZ25} is to show that the model of FMZVs is non-trivial; for example, that the elements $Z_{\cA}(k)=(B_{p-k}/k)_{p\in\cP}\in\cA$ are \emph{non-zero} for $k>1$ odd.
This seems to be equally hard to demonstrating the \emph{irrationality} of $Z_{\cA}(k)$, equivalently, of $(B_{p-k}\bmod p)_{p\in\cP}\in\cA$.

As a simple illustration of the irrationality concept in the poor-man's-ad\`ele-ring setting, we give an example which shares similarity with the ancient proof of the irrationality of $\sqrt{2}$ over~$\mathbb Q$.

\begin{example}
\label{ex:sqrt2}
Truncations of the series for
\[
\sqrt{1+x}=1+\frac12x+\dotsb=1+\sum_{k=1}^\infty\frac{(-1)^{k-1}C_{k-1}}{2^{2k-1}}\,x^k,
\quad\text{where}\; C_k=\binom{2k}{k}-\binom{2k}{k+1},
\]
at $x^{p-1}$ start
\begin{gather*}
s_2(x)=1 + \frac12x, \quad
s_3(x)=1 + \frac12x - \frac18x^2, \quad
s_5(x)=1 + \frac12x - \frac18x^2 + \frac1{16}x^3 - \frac5{128}x^4, \\
s_7(x)=1 + \frac12x - \frac18x^2 + \frac1{16}x^3 - \frac5{128}x^4 + \frac7{256}x^5 - \frac{21}{1024}x^6, \quad\dotsc.
\end{gather*}
Their modulo $p$ residues at $x=1$ read
\[
\bs(1)=\big(s_p(1)\bmod p\big)_p
=(\infty,-2, -2, 2, -2,\dots)
=2\big(\infty,(\tfrac{2}{p})\big)_p,
\]
where $(\tfrac{\,\cdot\,}{\,\cdot\,})$ denotes the Legendre--Kronecker symbol.
Assume there is a rational $r$ such that $r\equiv(\tfrac{2}{p})\bmod p$ for all primes $p\gg1$; then $r^2\equiv1\bmod p$ for all such $p$. Since $r^2$ is also rational and $(r^2)_p\sim(1)_p$, we get $r^2=1$. This means that either $r=-1$ or $r=1$, and any of the options contradict $r\equiv(\tfrac{2}{p})\bmod p$, since the Legendre symbol assumes both values $1$ and $-1$ infinitely often. The contradiction obtained means that $\bs(1)$\,---\,which is nothing but a version of $\sqrt2$ in the ring\,---\,is irrational.
\end{example}

There is a general construction of algebraic numbers in the ring $\cA$ (see \cite{Ro20,RTTY24}), which we review in Section~\ref{s2} below.
Roughly speaking, one starts with a $C$-finite sequence $(t_m)_{m\gg1}\in\mathbb Q^\infty$, that is, a sequence satisfying a linear recursion with \emph{constant} rational coefficients, and associate to it $\bt=(t_p\bmod p)_{p\in\cP}\in\cA$ (with all $t_p$ for $p<N$ set to be~$\infty$, say).
Then $\bt$ is said to be an algebraic number in $\cA$; one can further give a notion of degree based on the degree of minimal irreducible characteristic polynomial of the recursion.
For example, rational numbers come out of the simple recurrence equation $t_{n+1}-t_n=0$.

With this setup one can naturally start asking questions about transcendence of elements in $\cA$.
The following transcendence criterion from \cite{AF24} is almost immediate; it follows from Theorem~\ref{thm:Ro} we state below.

\begin{prop}[{\cite[Proposition 3.7]{AF24}}]
\label{pr1}
Let $\alpha\in\cA$. Assume that there exists a sequence of \emph{distinct} integers $(a_n)_{n}$ such that $a_n$ occurs infinitely often in $\alpha$ for every $n$ in ${\mathcal N}$, then $\alpha$ is an ${\mathcal A}$-transcendental number. 
\end{prop}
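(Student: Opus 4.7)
My plan is to prove the contrapositive: if $\alpha\in\cA$ is algebraic, then only finitely many distinct integers can occur infinitely often as residues of $\alpha$, directly contradicting the hypothesis.

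The central step is to use Theorem~\ref{thm:Ro} to exhibit a nonzero polynomial $Q\in\mathbb Q[x]$ with $Q(\alpha)=0$ in $\cA$. By that theorem, $\alpha\sim(t_p\bmod p)_{p\in\cP}$ for a $C$-finite rational sequence $\bt=(t_n)_n$ with closed form $t_n=\sum_i c_i\alpha_i^n$, where the roots $\alpha_i$ and coefficients $c_i$ lie in some Galois extension $K/\mathbb Q$ with group $G=\Gal(K/\mathbb Q)$. I would set
\[
Q(x)=\prod_{\sigma\in G}(x-v_\sigma), \qquad v_\sigma=\sum_i c_i\,\sigma(\alpha_i).
\]
Rationality of $t_n$ forces the pairs $(c_i,\alpha_i)$ to be $G$-permuted, hence $\tau(v_\sigma)=v_{\tau\sigma\tau^{-1}}$ for all $\tau\in G$; the coefficients of $Q$ are then $G$-invariant, so $Q\in\mathbb Q[x]$. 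For every unramified prime $p$ and every prime $\mathfrak p\mid p$ of $K$, the Frobenius congruence $\alpha_i^p\equiv\Frob_{\mathfrak p}(\alpha_i)\pmod{\mathfrak p}$ gives $t_p\equiv v_{\Frob_{\mathfrak p}}\pmod{\mathfrak p}$, so one factor of $Q(t_p)$ vanishes modulo $\mathfrak p$; since $Q(t_p)\in\mathbb Q$, this forces $Q(t_p)\equiv 0\pmod p$ for all $p\gg 1$.

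The conclusion is immediate: if an integer $a$ occurs infinitely often as a residue of $\alpha$, then $Q(a)\equiv Q(t_p)\equiv 0\pmod p$ for infinitely many primes $p$, so the fixed rational $Q(a)$ is divisible by infinitely many primes, forcing $Q(a)=0$. Since the nonzero polynomial $Q$ has only finitely many rational roots, at most finitely many integers can occur infinitely often in $\alpha$, contradicting the infinite sequence $(a_n)$ of distinct integers each occurring infinitely often. The main technical obstacle I anticipate is the repeated-roots case, where $t_n=\sum_i P_i(n)\alpha_i^n$ and one additionally needs $P_i(p)\equiv P_i(0)\pmod p$ to adjust the definition of $v_\sigma$; the Galois/Frobenius analysis otherwise goes through unchanged, and the rest is a one-line consequence of the fact that a nonzero rational polynomial has finitely many rational roots.
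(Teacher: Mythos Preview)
Your argument is correct, but it takes a detour that the paper's intended one-line deduction avoids. You invoke Theorem~\ref{thm:Ro} but then actually work from the $C$-finite \emph{definition} of $\cA$-algebraicity (the closed form $t_n=\sum_i c_i\alpha_i^n$) rather than from the map $\phi$ that Theorem~\ref{thm:Ro} supplies; your construction of the $v_\sigma$ and the verification $\tau(v_\sigma)=v_{\tau\sigma\tau^{-1}}$ is in effect a rederivation of one direction of Rosen's theorem, with $\phi(\sigma)=v_\sigma$. The paper's ``almost immediate'' route is to take the $\phi$ of Theorem~\ref{thm:Ro} as given: its image $\{\alpha_1,\dots,\alpha_k\}=\phi(\Gal(L/\mathbb Q))$ is finite, and for every large $p$ one has $t_p\equiv\alpha_{i(p)}\bmod\mathfrak p$ for some $\mathfrak p\mid p$, so any integer $a$ occurring infinitely often satisfies $p\mid\N_{L/\mathbb Q}(a-\alpha_i)$ for some fixed $i$ and infinitely many~$p$, forcing $a=\alpha_i$. (Equivalently, one can set $Q(x)=\prod_{\sigma}(x-\phi(\sigma))$; the equivariance hypothesis on $\phi$ in Theorem~\ref{thm:Ro} gives $Q\in\mathbb Q[x]$ for free.) Your polynomial $Q$ is the same object, just built from scratch. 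The upshot: your proof is valid, the repeated-roots patch via $P_i(p)\equiv P_i(0)\pmod p$ is fine, but you should cite the $C$-finite description as the definition and reserve Theorem~\ref{thm:Ro} for the shortcut it actually provides.
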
  

Following Schur, define the $q$-Fibonacci numbers $(F_n(q))_{n\ge0}$ via $F_0(q)=0$, $F_1(q)=1$ and
\[
F_{n+2}(q)=F_{n+1}(q)+q^nF_n(q) \quad \text{for}\; n\ge 0;
\]
when $q=1$ this coincides with the Fibonacci sequence.
The criterion above is applied in \cite[Theorem~1.1]{AF24} to prove the $\cA$-transcendence of the element $\bF(q)=(F_p(q))_{p\in\cP}\in\cA$ in the case when integer $q>1$~is square free, assuming the Generalized Riemann Hypothesis.
Our principal result demonstrates that these heavy restrictions can be dropped.

\begin{theorem}
\label{thm:Ourtheorem}
Let $q\in\mathbb Z_{>1}$. The element $\bF(q)=(F_p(q))_{p\in\cP}$ is $\cA$-transcendental. 
\end{theorem}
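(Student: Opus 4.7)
My plan is to apply Proposition~\ref{pr1}, exhibiting infinitely many distinct integers each of which equals $F_p(q)\bmod p$ for infinitely many primes~$p$. The engine will be a clean closed formula for $F_p(q)\bmod p$ in terms of classical Fibonacci numbers and the multiplicative order of~$q$ modulo~$p$.

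I would first set $M_j=\bigl(\begin{smallmatrix}1&1\\q^j&0\end{smallmatrix}\bigr)$, so that $F_n(q)=[M_0M_1\cdots M_{n-2}]_{11}$ by definition. For a prime $p\nmid q$ with $d=\ord_p(q)$ and $k=(p-1)/d$, the periodicity $M_{j+d}\equiv M_j\pmod p$ collapses the $(p-1)$-fold product into a $k$-th power:
\[
F_p(q)\equiv[P^k]_{11}\pmod p,\qquad P=M_0M_1\cdots M_{d-1}.
\]
Direct computation yields $\det P\equiv-1\pmod p$ (treating the parities of $d$ separately using $q^d\equiv1$).

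The structural heart of the argument is the trace identity $\operatorname{tr}(P)\equiv1\pmod p$. I would prove the polynomial divisibility $\Phi_d(q)\mid T_d(q)-1$ in $\mathbb Z[q]$, where $T_d(q)=\operatorname{tr}(M_0M_1\cdots M_{d-1})$. Expanding the trace combinatorially yields
\[
T_d(q)=\sum_{S}q^{\sigma(S)},\qquad\sigma(S)=\sum_{j\in S}j,
\]
with $S$ ranging over independent sets of the cycle graph $C_d$. At a primitive $d$-th root of unity $\zeta$, the rotation action of $\mathbb Z/d\mathbb Z$ organises independent sets into orbits; for any non-empty orbit consisting of sets of size~$m$ with stabiliser of size~$g$, the relation $g\mid m$ holds automatically (because $S$ is then a union of stabiliser-orbits of size~$g$ on the vertex set), and the orbit sum collapses to $\zeta^{\sigma(S)}(\zeta^{md/g}-1)/(\zeta^m-1)=0$. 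Only the empty orbit survives, giving $T_d(\zeta)=1$. Combined with $\det P\equiv-1$, Cayley--Hamilton applied to the Fibonacci characteristic polynomial $X^2-X-1$ gives $P^k=F_kP+F_{k-1}I$ with $F_j$ the classical Fibonacci numbers; together with $[P]_{11}=F_{d+1}(q)$ this yields the master formula
\[
F_p(q)\equiv F_k\cdot F_{d+1}(q)+F_{k-1}\pmod p,\qquad k=(p-1)/\ord_p(q).
\]

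The hard part is the final step: extracting from the master formula infinitely many distinct integers, each realised for infinitely many primes. A natural target sequence is $a_n=F_{n-1}$ for $n\ge 3$ (pairwise distinct Fibonacci integers); to hit $a_n$ one would look for primes~$p$ of Artin index $k(p)=n$ satisfying $F_{d+1}(q)\equiv 0\pmod p$, which collapses the master formula precisely to $F_p(q)\equiv F_{n-1}\pmod p$. Producing infinitely many such primes for every~$n$ is the sticking point which forced~\cite{AF24} to assume GRH for square-free~$q$. I expect that removing both restrictions will combine Zsigmondy-type primitive prime divisor results for the integers $F_{d+1}(q)$ with elementary bookkeeping on how the index $k(p)$ distributes among these divisors, replacing the Artin-density input of~\cite{AF24} by unconditional combinatorial counting; this is the step I anticipate will require the most care.
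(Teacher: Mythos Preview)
Your master-formula derivation is correct and rather elegant: the trace identity $T_d(\zeta)=1$ via the cycle-independent-set expansion and the rotation-orbit cancellation is clean, and together with $\det P\equiv-1$ it recovers (indeed refines) the congruence of Theorem~\ref{thm:AF} without the coprimality-to-$5$ hypothesis. The gap is exactly where you flag it, and it is genuine rather than a matter of care. To run Proposition~\ref{pr1} along your route you must, for each fixed $n\ge3$, produce infinitely many primes $p$ with residual index $k(p)=(p-1)/\ord_p(q)$ equal to~$n$ \emph{and} with $p\mid F_{\ord_p(q)+1}(q)$. Already the first condition alone---prescribing the index exactly---is an Artin-type problem (for $n=1$ it is literally Artin's primitive-root conjecture for~$q$), and Zsigmondy/primitive-divisor bookkeeping on the integers $F_{d+1}(q)$ gives no control over the index of their prime factors. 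So the proposed unconditional replacement for the GRH input of~\cite{AF24} does not get off the ground.

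The paper sidesteps Proposition~\ref{pr1} entirely. It argues by contradiction through Rosen's criterion (Theorem~\ref{thm:Ro}): if $\bF(q)$ were $\cA$-algebraic there would be a \emph{finite} set $\{\alpha_1,\dots,\alpha_k\}\subset L$ with $F_p(q)\equiv\alpha_{i(p)}\bmod p$ for all large~$p$. One fixes a large $r$ so that $F_{r-1}$ exceeds every norm $\N_{L/\mathbb Q}(\alpha_i)$, and by Chebotarev in the Kummer extension $\mathbb Q(e^{2\pi i/(5\lambda r)},q_0^{1/r})$ obtains $\gg X/(r^2\log X)$ primes $p\in[X,2X]$ with $r\mid I_p(q)$ and Theorem~\ref{thm:AF} applicable; each such $p$ then divides the non-zero norm $\N_{L/\mathbb Q}(F_{dr+\eta}-\alpha_i)$. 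Factoring $F_{dr+\eta}-\alpha_i$ over the golden ratio and pairing with $p\mid q_0^{(p-1)/(dr)}-1$, one splits on the sizes of $m=dr$ and $n=(p-1)/(dr)$: if either is at most $\sqrt{X}/\log X$ a product bound gives only $O(X/(\log X)^3)$ primes; if both lie in $[\sqrt{X}/\log X,\,2\sqrt{X}\log X]$, Ford's theorem on integers with a divisor in a short interval bounds the count by $X(\log\log X)^{O(1)}/(\log X)^{1+\delta}$ with $\delta>0$ the Erd\H os--Ford--Tenenbaum constant. This undercuts the Chebotarev lower bound for large~$X$. The key conceptual difference from your plan is that the paper never asks for primes of \emph{exact} index; it only plays a positive-density lower bound against a sub-density upper bound, and Ford's divisor-in-interval theorem is the unconditional analytic input that replaces GRH.
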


Theorem \ref{thm:Ourtheorem} is best possible (for positive integers~$q$) since $F_n(q)$ is constant $1$ for $q=0$ and $n\ge 1$, while it is the Fibonacci sequence for $q=1$. Our theorem can most likely be extended to negative integers $q\ne-1$; we do not pursue this task.

\section{Transcendence of Schur's $q$-Fibonacci sequence}
\label{s2}

The following result of Rosen from \cite{Ro20} gives a criterion for $\bt=(t_p)_{p\in\cP}\in\cA$ to be $\cA$-algebraic.

\begin{theorem}[{\cite[Theorem 1.1]{Ro20}}]
\label{thm:Ro}
Let $\bt=(t_p)_{p\in\cP}\in\cA$. The following conditions are equivalent.
\begin{itemize}
\item[(i)] The element $\bt$ is a finite algebraic number.
\item[(ii)] There exists a Galois extension $L/\mathbb Q$ and a map
$\phi\colon\Gal(L/\mathbb Q)\to L$ satisfying
\[
\phi(\sigma \tau\sigma^{-1})=\sigma(\phi(\tau)) \quad\text{for all}\; \sigma,\tau\in\Gal(L/\mathbb Q)
\]
such that
\[
(t_p)_{p\in\cP}=(\phi(\Frob_p)\bmod p)_{p\in\cP},
\]
where $\Frob_p$ is the Frobenius map of $L$ at the prime~$p$.
\end{itemize}
\end{theorem}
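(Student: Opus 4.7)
For the direct implication (ii) $\Rightarrow$ (i), the plan is short. Setting $\tau=\sigma$ in the equivariance identity yields $\phi(\sigma)=\sigma(\phi(\sigma))$, so $\phi(\sigma)\in L^{\langle\sigma\rangle}$; more generally, the image $\phi(G)\subset L$ (with $G=\Gal(L/\mathbb{Q})$) is a $G$-stable finite subset. Hence the polynomial $P(X):=\prod_{s\in\phi(G)}(X-s)$ has $G$-invariant coefficients and lies in $\mathbb{Q}[X]$. For every large prime $p$ unramified in $L$ and any $\mathfrak{p}\mid p$, the hypothesis gives $t_p\equiv\phi(\Frob_\mathfrak{p})\pmod{\mathfrak{p}}$ with $\phi(\Frob_\mathfrak{p})$ a root of $P$, so $P(t_p)\equiv0\pmod{\mathfrak{p}}$; since $P(t_p)\in\mathbb{Z}$ (after clearing denominators in $P$), this descends to $P(t_p)\equiv0\pmod{p}$, establishing $P(\mathbf{t})=0$ in $\mathcal{A}$ and showing $\mathbf{t}$ is finite algebraic.

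For the converse (i) $\Rightarrow$ (ii), I would take a (squarefree) minimal polynomial $P\in\mathbb{Q}[X]$ of $\mathbf{t}$, let $L$ be its splitting field, $G=\Gal(L/\mathbb{Q})$, and $\alpha_1,\dots,\alpha_n\in L$ its roots. For large primes $p$ unramified in $L$ and coprime to the discriminant of $P$, the reductions $\bar\alpha_i$ modulo any $\mathfrak{p}\mid p$ are distinct, so the congruence $P(t_p)\equiv0\pmod p$ forces $t_p$ to coincide with the reduction of exactly one root $\alpha_{i(p,\mathfrak{p})}$. Since $t_p$ lies in the prime subfield $\mathbb{F}_p\subset\mathcal{O}_L/\mathfrak{p}$, the chosen root must be $\Frob_\mathfrak{p}$-fixed, i.e., $\alpha_{i(p,\mathfrak{p})}\in L^{\langle\Frob_\mathfrak{p}\rangle}$, which already matches the ``type'' that $\phi(\Frob_\mathfrak{p})$ is supposed to have.

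The heart of the proof, and the main obstacle, is to promote this prime-by-prime selection into a globally consistent function $\phi:G\to L$ obeying the equivariance relation. The plan is as follows: by the Chebotarev density theorem, for each $\sigma\in G$ the set of primes $\mathfrak{p}$ of $L$ with $\Frob_\mathfrak{p}=\sigma$ has positive density, and the chosen roots along this set take values in the finite set $S_\sigma:=\{\alpha:P(\alpha)=0,\ \sigma(\alpha)=\alpha\}$, so pigeonhole produces some $\alpha_\sigma\in S_\sigma$ occurring infinitely often. To force single-valuedness of $\phi(\sigma):=\alpha_\sigma$ — the crucial step — I would invoke the minimality of $P$: if two distinct $\sigma$-fixed roots were each hit along density-positive subsets of primes with $\Frob_\mathfrak{p}=\sigma$, one could run this across all conjugacy classes and discard the Galois orbit of one of them to produce a proper divisor $P'\mid P$ in $\mathbb{Q}[X]$ that still vanishes on $\mathbf{t}$ in $\mathcal{A}$, contradicting minimality. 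With uniqueness secured, setting $\phi(\sigma):=\alpha_\sigma$ on one representative per conjugacy class and extending by $\phi(\tau\sigma\tau^{-1}):=\tau(\phi(\sigma))$ yields the desired map; consistency of this extension is precisely recorded by the equivariance condition, and matches the $G$-action on primes above $p$, which conjugates Frobenius elements and the corresponding residues accordingly.
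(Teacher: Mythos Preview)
The paper does not prove this theorem; it is quoted from Rosen \cite{Ro20} and used as a black box in the proof of Theorem~\ref{thm:Ourtheorem}. There is therefore no proof in the paper against which to compare your attempt.

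That said, your argument has a substantive gap rooted in a conflation of two notions that the paper explicitly distinguishes (see the Remark following Theorem~\ref{thm:AF}). Being a \emph{finite algebraic number} in Rosen's sense means arising as $(t_p\bmod p)_p$ from a $C$-finite rational sequence $(t_n)_n$; the \emph{na\"\i ve} notion --- that some $P\in\mathbb{Q}[X]$ satisfies $P(\bt)=0$ in $\cA$ --- is strictly weaker: every sequence $(t_p)_p$ with $t_p\in\{\pm1\}$ is a root of $x^2-1$, and there are uncountably many of these. Your (ii)$\Rightarrow$(i) argument only establishes the na\"\i ve conclusion $P(\bt)=0$, not the existence of a $C$-finite interpolation. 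More seriously, your (i)$\Rightarrow$(ii) argument \emph{starts} from a minimal annihilating polynomial of $\bt$, i.e.\ from the na\"\i ve hypothesis, and this cannot possibly succeed: for the $\pm1$ sequences just mentioned there is no map $\phi$ as in (ii), since the prime-by-prime choices are arbitrary and cannot be governed by finitely many values on a Galois group. Concretely, your ``minimality'' step for single-valuedness breaks: if two $\sigma$-fixed roots $\alpha,\beta$ are each hit on infinite sets of primes, removing the Galois orbit of $\beta$ from $P$ yields a $P'$ with $P'(t_p)\not\equiv0\pmod p$ at every prime where $t_p\equiv\beta$, so $P'(\bt)\ne0$ in $\cA$ and no contradiction arises. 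A correct proof of (i)$\Rightarrow$(ii) must exploit the $C$-finite recursion, not merely the existence of an annihilating polynomial.
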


Before going to the details of our proof of Theorem \ref{thm:Ourtheorem} let us review the strategy of establishing the transcendence in \cite{AF24}.
The core of the argument there is the following important observation about the $q$-Fibonacci numbers.

\begin{theorem}[{\cite[Theorem 1.1]{AF24}}]
\label{thm:AF}
For $\alpha\in \mathbb Q^*\setminus\{1\}$ and $p\in\cP$ satisfying $\nu_p(\alpha)=\nu_p(\alpha-\nobreak1)=0$ and $\ord_p(\alpha)\not\equiv 0\bmod 5$, we have
\[
F_p(\alpha)\equiv F_{I_p(\alpha)+\left(\frac{\ord_p(\alpha)}{5}\right)}\bmod p.
\]
Here $I_p(\alpha)=(p-1)/\ord_p(\alpha)$ is the residual index of $\alpha$ modulo~$p$, while $\big(\frac{\cdot}{5}\big)$ is the Legendre symbol. 
\end{theorem}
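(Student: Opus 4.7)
The plan is to combine Schur's explicit formula
\[
F_n(q)=\sum_{k\ge0}q^{k^2}\binom{n-1-k}{k}_q
\]
for the $q$-Fibonacci polynomials (where $\binom{m}{k}_q$ denotes the Gaussian $q$-binomial) with a $q$-analogue of Lucas' theorem due to D\'esarm\'enien. Put $d=\ord_p(\alpha)$ and $I=I_p(\alpha)=(p-1)/d$; the assumptions $\nu_p(\alpha)=\nu_p(\alpha-1)=0$ ensure that $d\ge2$, that $Id=p-1$, and that $\alpha\bmod p$ has order exactly $d$ in $\mathbb F_p^{\times}$. Writing the summation index in Schur's formula at $n=p$, $q=\alpha$ as $k=ad+b$ with $0\le b<d$, one has $\alpha^{k^2}\equiv\alpha^{b^2}\pmod p$, and the $q$-Lucas congruence
\[
\binom{n}{k}_\alpha\equiv\binom{\lfloor n/d\rfloor}{\lfloor k/d\rfloor}\binom{n\bmod d}{k\bmod d}_\alpha\pmod p,
\]
applied with $n=p-1-k$, factors each summand of Schur's formula into an ordinary binomial coefficient in the variable $a$ times a $q$-binomial $\binom{d-b}{b}_\alpha$ in the variable $b$; the latter is nonzero only for $0\le b\le\lfloor d/2\rfloor$.

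Separating the resulting double sum and invoking the classical identity $F_{m+1}=\sum_a\binom{m-a}{a}$, one arrives at
\[
F_p(\alpha)\equiv\Bigl(\sum_{a}\binom{I-a}{a}\Bigr)+\Bigl(\sum_{a}\binom{I-a-1}{a}\Bigr)\Bigl(\sum_{b=1}^{\lfloor d/2\rfloor}\alpha^{b^2}\binom{d-b}{b}_\alpha\Bigr)\pmod p,
\]
which equals $F_{I+1}+F_I\bigl(F_{d+1}(\alpha)-1\bigr)$, the final inner factor being $F_{d+1}(\alpha)-1$ because Schur's formula at $n=d+1$ differs from the last sum only by the $b=0$ contribution. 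Using $F_{I+1}-F_I=F_{I-1}$ this collapses to the central intermediate identity
\[
F_p(\alpha)\equiv F_{I-1}+F_I\cdot F_{d+1}(\alpha)\pmod p.
\]

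At this point the theorem reduces to the evaluation $F_{d+1}(\alpha)\equiv\tfrac12\bigl(1+\bigl(\tfrac{d}{5}\bigr)\bigr)\pmod p$, i.e.\ $F_{d+1}(\alpha)\equiv0$ or $1$ according as $\bigl(\tfrac{d}{5}\bigr)=-1$ or $+1$, and this is where I expect the main difficulty to lie. Because $\alpha\bmod p$ is a root of the cyclotomic polynomial $\Phi_d$, the statement is equivalent to the pure polynomial congruence $F_{d+1}(x)\equiv\tfrac12\bigl(1+(\tfrac{d}{5})\bigr)\pmod{\Phi_d(x)}$ in $\mathbb Z[x]$, valid whenever $5\nmid d$. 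This is a finite-level shadow of the $q$-Rogers-Ramanujan phenomenon underlying Schur's original use of $F_n(q)$: the dichotomy $d\equiv\pm1\pmod5$ versus $d\equiv\pm2\pmod5$ matches the pentagonal-number dissection of the two Rogers-Ramanujan products. To prove it I would evaluate $F_{d+1}(\zeta_d)=\sum_{b=0}^{\lfloor d/2\rfloor}\zeta_d^{b^2}\binom{d-b}{b}_{\zeta_d}$ at a primitive $d$-th root of unity, convert each $q$-binomial via the reflection $\zeta_d^{d-m}-1=-\zeta_d^{-m}(\zeta_d^m-1)$, and then exploit the periodicity of $b^2\bmod5$ to collapse the sum. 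Once $F_{d+1}(\alpha)\bmod p$ is in hand, substitution into the intermediate identity delivers $F_p(\alpha)\equiv F_{I-1}$ or $F_{I-1}+F_I=F_{I+1}$, i.e.\ $F_p(\alpha)\equiv F_{I+(\frac{d}{5})}\pmod p$, as claimed.
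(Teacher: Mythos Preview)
The paper does not give its own proof of this statement: Theorem~\ref{thm:AF} is quoted verbatim from \cite{AF24} and used as a black box in the proof of Theorem~\ref{thm:Ourtheorem}. So there is nothing in the present paper to compare your argument against.

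That said, your plan is sound and is in fact the natural route (and, as far as one can tell from the literature, essentially the one taken in \cite{AF24}). The reduction via Schur's formula and the $q$-Lucas congruence is carried out correctly; in particular your splitting of the $b=0$ and $b\ge1$ contributions and the resulting intermediate identity
\[
F_p(\alpha)\equiv F_{I-1}+F_I\cdot F_{d+1}(\alpha)\pmod p
\]
are right. You have also correctly isolated the crux of the matter: the evaluation of $F_{d+1}$ at a primitive $d$-th root of unity, which equals $1$ for $d\equiv\pm1\pmod5$ and $0$ for $d\equiv\pm2\pmod5$. This dichotomy is a classical result of Schur from his combinatorial work on the Rogers--Ramanujan identities, so rather than attempting the somewhat informal ``periodicity of $b^2\bmod5$'' collapse you sketch, you could simply cite Schur's evaluation (or the standard modern treatments of it) and be done. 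If you do want a self-contained argument, the cleanest route is not a direct attack on the sum but an induction on $d$ within each residue class modulo~$5$, using the recursion $F_{n+2}(q)=F_{n+1}(q)+q^nF_n(q)$ specialised to $q=\zeta_d$ together with the companion evaluation of $F_d(\zeta_d)$.
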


This theorem gives the residue class of $F_p(\alpha)$ modulo $p$ for a positive proportion of all primes (say, all primes $p\equiv 2,3,4\bmod 5$ which are sufficiently large, so that they do not divide $ab(a-b)$ in the representation of $\alpha=a/b$ with coprime integers $a,b$). Using work of Moree from \cite{Mo06}, the authors of \cite{AF24} prove the transcendence of $\bF(\alpha)$ for $\alpha=q\in\mathbb Z_{>1}$ assuming additional hypotheses. 

\begin{remark}
Algebraic elements in $\cA$ as defined by Rosen form a subalgebra of $\cA$; this subalgebra can be viewed as a finite analogue of the algebraic closure $\overline{\mathbb Q}$ of~$\mathbb Q$ (see \cite[Section~4]{Ro20}).
There is an alternative notion of `na\"\i ve' $\cA$-algebraic number originated in \cite{AF24}: it is an element $(t_p)_{p\in\cP}$ for which there is a polynomial $Q(x)\in\mathbb Q[x]$ such that $(Q(t_p))_{p\in\cP}$ is zero in~$\cA$.
It is not hard to see that $\cA$-algebraic numbers of Rosen are automatically na\"\i ve $\cA$-algebraic numbers and that for detecting the na\"\i ve $\cA$-transcendence one can still use Proposition~\ref{pr1}:
Theorem~1.2 from \cite{AF24} covers both the na\"\i ve and Rosen's $\cA$-transcendence of~$\bF(q)$.
At the same time, the algebra of na\"\i ve $\cA$-algebraic numbers does not truly display an algebraic structure: for example, any element $(t_p)_{p\in\cP}$ with \emph{any} choice of $t_p\in\{\pm1\}$ (and there are uncountably many of those) is a root of $x^2-1$.
Our proof of Theorem~\ref{thm:Ourtheorem} makes use of Rosen's Theorem~\ref{thm:Ro}, so we do not discuss the na\"\i ve $\cA$-transcendence of~$\bF(q)$.
\end{remark}

\begin{proof}[Proof of Theorem~\textup{\ref{thm:Ourtheorem}}]
We assume for a contradiction  that $\bF(q)$ is $\cA$-algebraic. Thus, there exist data $L/\mathbb Q$ and $\phi\colon\Gal(L/\mathbb Q)\to L$ as in (ii) of Theorem \ref{thm:Ro}. Let 
\[
\{\alpha_1,\dots,\alpha_k\}=\phi(\Gal(L/\mathbb Q))
\] 
be the finite set of algebraic numbers given by the image of $\Gal(L/\mathbb Q)$ in~$L$.
It follows that, for all large primes $p$, there is some $i=i(p)\in \{1,\dots,k\}$ such that  
\[
F_p(q)\equiv \alpha_{i(p)}\bmod p.
\]
Write $q=q_0^{\lambda}$, where $\lambda\ge 1$ is a positive integer and $q_0>1$ is not a perfect power of any other integer. Let $r$ be a sufficiently large positive integer which is coprime to $5\lambda$ such that 
\begin{equation}
\label{eq:max}
F_{r-1}>\max\{\N_{L/\mathbb Q}(\alpha_i): 1\le i\le k\},
\end{equation}
where $\N$ stands for the norm.
Let $X$ be a large real number, and consider primes $p\in [X,2X]$ such that 
\[
p \equiv 1 \bmod r, \quad
p \equiv -1 \bmod {5\lambda}, \quad\text{and}\quad
\ord_p(q) \mid \frac{p-1}{r}.
\]
The Frobenius of the above primes form a conjugacy class in $\Gal(M/\mathbb Q)$, where $M=\mathbb Q(e^{2\pi i/(5\lambda r)}, \sqrt[r]{q_0})$. 
In particular, the number of such primes is 
\begin{equation}
\label{eq:large}
\gg \frac{X}{r^2\log X},
\end{equation}
where the multiplicative constant implied by the Vinogradov symbol depends on~$q$. 
Note that for such primes $\ord_p(q)=\ord_p(q_0)$, so that 
\[
\ord_p(q_0)=\ord_p(q)=\frac{p-1}{dr},
\]
where $d$ is some divisor of $(p-1)/r$.
Note that $\ord_p(q)$ is coprime to~$5$; it then follows that $I_p(q)=dr$.
Thus, 
\[
F_{I_p(q)+\left(\frac{\ord_p(q)}{5}\right)}=F_{dr+\eta} \quad\text{where}\; \eta=\left(\frac{\ord_p(q)}{5}\right)\in\{\pm 1\}.
\]
Therefore, $F_{dr+\eta}\equiv \alpha_i\bmod p$ with $i=i(p)$; in particular, 
\[
p\mid\N_{L/\mathbb Q}(F_{dr+\eta}-\alpha_i).
\]
Note that the norm is non-zero by condition~\eqref{eq:max}. Take
\[
(\alpha,\beta)=\bigg(\frac{1+{\sqrt{5}}}{2}, \frac{1-{\sqrt{5}}}{2}\bigg)
\]
to write 
\[
F_n=\frac{\alpha^n-\beta^n}{\sqrt{5}}.
\] 
Then
\begin{align*}
F_{dr+\eta}-\alpha_i
& = \frac{\alpha^{dr+\eta}-\beta^{dr+\eta}}{\sqrt{5}}-\alpha_i \\
& = \frac{\alpha^{-dr+\eta}}{\sqrt{5}}\big(\alpha^{2dr}-({\sqrt{5}}\alpha^{-\eta} \alpha_i)\alpha^{dr}+(-1)^{dr}\big) \\
& = \frac{\alpha^{-dr+\eta}}{\sqrt{5}}(\alpha^{dr}-\alpha_{i,\eta,\epsilon}')(\alpha^{dr}-\alpha_{i,\eta,\epsilon}''),
\end{align*}
where $\epsilon\in \{0,1\}$ is such that $dr\equiv \epsilon\bmod 2$ and $\alpha_{i,\eta,\epsilon}'$, $\alpha_{i,\eta,\epsilon}''$ are the roots of the polynomial
\[
x^2-(\sqrt{5}\alpha^{-\eta}\alpha_i)x+(-1)^{\epsilon}.
\]
In particular, the number $\alpha_{i,\eta,\epsilon}$ is at most quadratic over~$M$ (note that $M$ already contains $\sqrt{5}$). Varying $i\in\{1,\dots,k\}$, $\eta\in\{\pm 1\}$ and $\epsilon\in \{0,1\}$ and putting 
\begin{gather*}
N=M(\alpha_{i,\eta,\epsilon}: i\in \{1,\dots, k\},\; \eta  \in \{\pm 1\},\; \epsilon\in \{0,1\}),
\\
\{\beta_1,\dots,\beta_\ell\}=\{\alpha_{i,\eta,\epsilon}', \alpha_{i,\eta,\epsilon}'': 1\le i\le k,\;\eta \in \{\pm 1\}, \;\epsilon\in \{0,1\}\},
\end{gather*}
we get that for each such $p$ there exists a prime ideal $\frak p$ in $N$ sitting above $p$ such that 
$\frak p\mid \alpha^{dr}-\beta_j$
for some $j=1,\dots,\ell$. At the same time we also have that 
$p\mid q_0^{(p-1)/(dr)}-1$.
Thus, 
\[
\frak p\mid \gcd(\alpha^{dr}-\beta_j,q_0^{(p-1)/(dr)}-1).
\]
Set $(m,n)=(dr,(p-1)/(dr))$. Since $mn=p-1\le 2X$, we distinguish three situations.

\noindent
\textbf{Case 1}: $n\le\sqrt{X}/\log X$. 
Let $\cP_1$ be the set of primes in this category. For $p\in\cP_1$, we see that there exists $n\le \sqrt{X}/\log X$ such that 
$p\mid q_0^{n}-1$.
Therefore,
\[
\prod_{p\in \cP_1} p\le \prod_{n\le \sqrt{X}/\log X} (q_0^n-1)
<\exp\bigg((\log q_0)\sum_{n\le \sqrt{X}/ \log X} n\bigg)
=\exp\bigg(O\bigg(\frac{X}{(\log X)^2}\bigg)\bigg).
\]
Since $p\ge X$ for all $p\in \cP_1$, it follows that 
\[
\#\cP_1\ll \frac{X}{(\log X)^3}.
\]

\noindent
\textbf{Case 2}: $m\le \sqrt{X}/\log X$.
Let $\cP_2$ be the set of primes in this category. Then 
\[
p\mid\N_{N/\mathbb Q}(\alpha^m-\beta_j)
\]
for some $j=1,\dots,\ell$ and some $m\le\sqrt{X}/\log X$. Multiplying the above divisibility relations altogether we get
\begin{align*}
\prod_{p\in \cP_2} p
\Bigm| \prod_{\substack{1\le j\le \ell\\ t\le m\le\sqrt{X}/\log X}}\N_{N/\mathbb Q}(\alpha^m-\beta_j)
&\le \exp\bigg(O\bigg(\sum_{m\le \sqrt{X}/\log X} m\bigg)\bigg) \\
&=\exp\bigg(O\bigg(\frac{X}{(\log X)^2}\bigg)\bigg).
\end{align*}
As in Case~1 we deduce that that
\[
\#\cP_2\ll \frac{X}{(\log X)^3}.
\]

\noindent
\textbf{Case 3}: $n>\sqrt{X}/\log X$ and $m>\sqrt{X}/\log X$. 
Let $\cP_3$ be the set of such primes. It then follows that $n$ is a divisor of $p-1$ in the interval $[{\sqrt{X}}/\log X, 2{\sqrt{X}}\log X]$. By results of Ford \cite[Theorem 1\,(v) and Theorem~6]{Fo08} we obtain that 
\[
\#\cP_3\le \frac{X(\log\log X)^{O(1)}}{(\log X)^{1+\delta}},
\]
where
\[
\delta=1-\frac{1+\log\log 2}{\log 2}=0.086071\dots
\]
is the Erd\H os--Ford--Tenenbaum constant.

Summarising, the set of our primes is contained in $\cP_1\cup\cP_2\cup\cP_3$, whose total count is
\[
\le \#\cP_1+\#\cP_2+\#\cP_3
\ll \frac{X(\log\log X)^{O(1)}}{(\log X)^{1+\delta}}.
\]
On the other hand, the number of such primes is at least $X/(r^2\log X)$ by what we said at \eqref{eq:large}. We thus get that 
\[
\frac{X}{r^2\log X} \ll \frac{X(\log\log X)^{O(1)}}{(\log X)^{1+\delta}},
\]
which is false for $X>X(r)$ sufficiently large with respect to~$r$. This is the desired contradiction. 
\end{proof}

\section{Concluding remarks}
\label{s3}

Returning to Example~\ref{ex:sqrt2}, one can use the recursion $t_{n+4}+t_n=0$ to define the sequence $\big(2(\tfrac{2}{n})\big)_{n\ge3}$, which will then be equivalent to $\bs(1)$.
At the same time, the sequences $(s_n(1))_{n\ge3}$ and $\big(2(\tfrac{2}{n})\big)_{n\ge3}$ satisfy different recursions, over $\mathbb Z[n]$:
for the former one, we get the inhomogeneous recurrence equation
\[
s_{n+1}(1)-s_n(1)=\frac{(-1)^{n-1}C_{n-1}}{2^{2n-1}} \quad\text{for}\; n=1,2,\dots,
\]
which can be converted into a homogeneous one (because the inhomogenuity satisfies a first-order recursion).

Is there an algorithmic strategy to prove the equality in the ring~$\cA$?
In other words, assume that two rational-valued sequences $(t_n)_{n\gg1}$ and $(s_n)_{n\gg1}$ (eventually) satisfy some linear recurrence equations with coefficients in $\mathbb Z[n]$ (perhaps, with an extra condition that the degree of both boundary coefficients is also the maximal degree of all its coefficients, to guarantee that the characteristic polynomial does not degenerate).
Is it possible to check, from the recursions and initial data, whether $t_p\equiv s_p\bmod p$ for all primes $p\gg1$ or not?
Equivalently (by passing to the difference $t_p-s_p$), to decide when the $\cA$-projection of a solution to a recursion with polynomial coefficients is zero?

Of course, the square root example above is too simplistic.
One potential interesting application would be to the sequence
\[
t_n=(-1)^{n-1}\sum_{k=1}^{n-1}\frac{(-1)^k}{k}
\]
satisfying a simple recursion $t_{n+1}+t_n=1/n$ or, after homogenisation,
\[
nt_{n+1}+t_n-(n-1)t_{n-1}=0.
\]
We expect $(t_p\bmod p)_{p\in\cP}\in\cA$ to be irrational (and even transcendental), in particular non-zero;
the latter corresponds to $2^{p-1}\not\equiv1\bmod{p^2}$ happening for infinitely many primes $p$\,---\,a particular prediction from Campana's conjectures, which we do not know how to prove (though it is known to follow from the $abc$-conjecture \cite{Si88}).
The irrationality of $(t_p\bmod p)_{p\in\cP}$ is a finite analogue of the irrationality of $\log2$ (see \cite{KMS25}), and perhaps the first step to do before looking into the irrationality of $Z_{\cA}(k)$ for $k>1$~odd.

The similarity between this last example and $Z_{\cA}(3)$ discussed in Section~\ref{s1} is witnessed through Wolstenholme's theorem
\[
\sum_{k=1}^{p-1}\frac{1}{k}\equiv\frac13p^2B_{p-3}\bmod{p^3}
\quad\text{for primes}\; p>3;
\]
more generally\,---\,thanks to Wolstenholme and Glaisher \cite{Me11}\,---\,we have
\[
-\frac{(m-1)(m-2)}{2m}\,p^2B_{p-m}\equiv\sum_{k=1}^{p-1}\frac1{k^{m-2}}\bmod{p^3}
\]
and
\[
\frac{m-1}{m}\,pB_{p-m}\equiv\sum_{k=1}^{p-1}\frac1{k^{m-1}}\bmod{p^2}
\]
for $m>1$ odd and primes $p>m$.

A family of transcendental examples, very different from the one treated in our Theorem~\ref{thm:Ourtheorem}\,---\,though conditional, corresponds to the sequence of Frobenius traces $\ba=(a_p)_{p\in\cP}$ attached to an elliptic curve $E$ without complex multiplication. The Lang--Trotter conjecture predicts, in particular, that infinitely many values of $r\in\mathbb Z$ are attained by the trace, $a_p=r$, so that the $\cA$-transcendence follows from Proposition~\ref{pr1}.
At the same time, the $\cA$-irrationality of $\ba$ is unconditional:
a theorem of Elkies \cite{El87} shows there are infinitely many primes $p$ with $a_p=0$;
at the same time the number of primes $p\le x$ for which $a_p=0$ is $O(x^{3/4})$\,---\,this can be derived by using a result of Kaneko~\cite{Ka89},
so that there are infinitely many primes $p$ with $a_p\ne0$, hence $a_p\not\equiv0\bmod p$.
The irrationality also holds for CM elliptic curves thanks to Deuring's theorem \cite{De41}.
Similar (conditional) conclusions can be made for the eigenvalues $(a_p)_{p\in\cP}$ of a Hecke eigenform of an arbitrary weight.

\begin{acknowledgements}
We are thankful to Gunther Cornelissen, Andrew Granville, Masanobu Kaneko, Toshiki Matsusaka, Pieter Moree and Igor Shparlinski for their comments on preliminary drafts of this note.
Ideas for this project were cemented at the conference on \emph{Asymptotic Counting and $L$-Functions} in the Max Planck Institute for Mathematics (Bonn, Germany) in May 2025.
We thank the participants of the event and staff of the institute for a unique inspiring atmosphere of the week.

F.L.\ was partly supported by the 2024 ERC Synergy Project DynAMiCs.
\end{acknowledgements}



\begin{thebibliography}{99}

\bibitem{AF24}
\textsc{T. Anzawa} and \textsc{H. Funakura},
Congruences for the $q$-Fibonacci sequence related to its transcendence,
\emph{Ramanujan J.} \textbf{63} (2024), no.~4, 1057--1072.

\bibitem{De41}
\textsc{M. Deuring},
Die Typen der Multiplikatorenringe elliptischer Funktionenk\"orper,
\emph{Abh. Math. Sem. Hansischen Univ.} \textbf{14} (1941), 197--272.

\bibitem{El87}
\textsc{N.\,D. Elkies},
The existence of infinitely many supersingular primes for every elliptic curve over $\mathbb Q$,
\emph{Invent. Math.} \textbf{89} (1987), no. 3, 561--567.

\bibitem{Fo08}
\textsc{K. Ford},
The distribution of integers with a divisor in a given interval,
\emph{Ann. of Math.} (2) \textbf{168} (2008), 367--433. 

\bibitem{Ka89}
\textsc{M. Kaneko},
Supersingular $j$-invariants as singular moduli $\bmod p$,
\emph{Osaka J. Math.} \textbf{26} (1989), no.~4, 849--855.

\bibitem{Ka19}
\textsc{M. Kaneko},
An introduction to classical and finite multiple zeta values,
\emph{Publ. Math. Besan\c son} (2019), no.~1, 103--129.

\bibitem{KMS25}
\textsc{M. Kaneko}, \textsc{T. Matsusaka} and \textsc{S. Seki},
On finite analogues of Euler's constant,
\emph{Intern. Math. Res. Not.} (2025), no.~2, rnae281, 12~pp.

\bibitem{KZ25}
\textsc{M. Kaneko} and \textsc{D. Zagier},
Finite multiple zeta values,
\emph{in preparation}.

\bibitem{Ko09}
\textsc{M. Kontsevich},
Holonomic $\mathcal D$-modules and positive characteristic,
\emph{Japan. J. Math.} \textbf{4} (2009), no.~1, 1--25.

\bibitem{Me11}
\textsc{R. Me\v strovi\'c},
Wolstenholme's theorem: its generalizations and extensions in the last hundred and fifty years (1862--2012),
\emph{Preprint} \href{https://arxiv.org/abs/1111.3057}{\texttt{arXiv:1111.3057 [math.NT]}} (2011), 31~pp.

\bibitem{Mo06}
\textsc{P. Moree},
On the distribution of the order and index of $g\pmod p$ over residue classes. II,
\emph{J. Number Theory} \textbf{117} (2006), 330--354.

\bibitem{Ro20}
\textsc{J. Rosen},
A finite analogue of the ring of algebraic numbers,
\emph{J. Number Theory} \textbf{208} (2020), 59--71.

\bibitem{RTTY24}
\textsc{J. Rosen}, \textsc{Y. Takeyama}, \textsc{K. Tasaka} and \textsc{S. Yamamoto},
The ring of finite algebraic numbers and its application to the law of decomposition of primes,
\emph{J. Number Theory} \textbf{263} (2024), 335--365.

\bibitem{Si88}
\textsc{J.\,H. Silverman},
Wieferich's criterion and the $abc$-conjecture,
\emph{J. Number Theory} \textbf{30} (1988), no.~2, 226–237. 

\end{thebibliography}
\end{document}